\newtheorem{theorem}{Theorem}[section]
\newtheorem{lemma}[theorem]{Lemma}
\newtheorem{assumption}{Assumption A}
\theoremstyle{definition}
\numberwithin{equation}{section}
\newcommand{\N}{\mathcal{N}}
\newcommand{\D}{\mathcal{D}}
\begin{document}
	
\title{ASPEN: An Additional Sampling Penalty Method for Finite-Sum Optimization Problems  with Nonlinear Equality Constraints}

\author{
Nataša Krejić\thanks{Department of Mathematics and Informatics, Faculty of Sciences, University of Novi Sad, Trg Dositeja Obradovića 4, 21000 Novi Sad, Serbia. Emails: \texttt{natasak@uns.ac.rs}, \texttt{natasa.krklec@dmi.uns.ac.rs}}, 
Nataša Krklec Jerinkić\footnotemark[1], 
Tijana Ostojić\thanks{Department of Fundamental Sciences, Faculty of Technical Sciences, University of Novi Sad, Trg Dositeja Obradovića 6, 21000 Novi Sad, Serbia. Email: \texttt{tijana.ostojic@uns.ac.rs}}, 
Nemanja Vučićević\thanks{Department of Mathematics and Informatics, Faculty of Sciences, University of Kragujevac, Radoja Domanovića 12, 34000 Kragujevac, Serbia. Email: \texttt{nemanja.vucicevic@pmf.kg.ac.rs}.} \footnote{Corresponding author.}
}
    \date{August 4, 2025}
\maketitle

\begin{abstract} We propose a novel algorithm for solving non-convex,  nonlinear equality-constrained finite-sum optimization problems. The proposed algorithm incorporates an additional sampling strategy for sample size update into the well-known framework of quadratic penalty methods. Thus, depending on the problem at hand, the resulting method may exhibit a sample size strategy ranging from a mini-batch on one end,  to increasing sample size that achieves the full sample eventually, on the other end of the spectrum. A non-monotone line search is used for the step size update, while the penalty parameter is also adaptive. The proposed algorithm avoids costly projections, which, together with the sample size update, may yield significant computational cost savings. Also, the proposed method can be viewed as a transition of an additional sampling approach for unconstrained and linear constrained problems,  to a more general class with non-linear constraints.  The almost sure convergence is proved under a standard set of assumptions for this framework, while numerical experiments on both academic and real-data based machine learning problems demonstrate the effectiveness of the proposed approach.  
\end{abstract}
\textbf{Key words:} Constrained optimization, Sample Average Approximation, non-monotone line search, additional sampling, penalty functions, almost sure convergence, KKT point. \\
\textbf{MSC Classification:} 90C15, 90C30, 65K05, 65K10.

\section{Introduction}\label{sec1}

We consider a nonlinear equality-constrained optimization problem of the form
\begin{align}\label{OriginalProblem}
    \min_x f(x) := \dfrac{1}{N}\sum_{i=1}^{N} f_i(x), \quad \mbox{ subject to } \quad h(x)=0,
\end{align}
where $f_i: \mathbb{R}^n \to \mathbb{R}$, $i=1,\ldots,N$ and $h: \mathbb{R}^n \rightarrow \mathbb{R}^m$ are continuously differentiable.
Optimization problems of this structure naturally arise in various real-world contexts, particularly in areas such as machine learning, deep learning, and network system optimization. They are commonly encountered when training models such as logistic regression and deep neural networks, where the goal is to minimize a cumulative loss across a dataset while satisfying specific constraints. Owing to both their practical relevance and mathematical complexity, these problems continue to attract significant attention in contemporary research.

A wide range of deterministic algorithms has been developed for solving equality-constrained nonlinear optimization problems. One of the frequently used approaches is based on penalty methods and augmented Lagrangian techniques \cite{huyer}. In these methods one reformulates the constrained problem by adding a penalty term to the objective function to penalize constraints violation, governed by a penalty parameter, thereby transforming the problem into an unconstrained one. Penalty methods can also be effectively applied in stochastic environments. In the stochastic case, when either the objective function or the constraints are defined in terms of expectations, several methods are proposed for solving such problems by penalty-based techniques \cite{krejic1, krklecjerinkic1, polak, wang}.

Stochastic approaches that use stochastic approximations of the gradient have been proposed to reduce computational cost per iteration, leading to various stochastic penalty and projection methods  \cite{lan, nandwani, negiar}.

Another popular class of algorithms for the considered problems is based on Sequential Quadratic Programming (SQP). In the stochastic setting, the search direction is typically generated by solving a quadratic subproblem by using a stochastic gradient estimate \cite{berahas2}, and the convergence complexity of the method has been studied in \cite{curtis2}. Various extensions have been proposed, including adaptive sampling techniques \cite{berahas1}, variance reduction \cite{Numerika}, and structural improvements \cite{berahas3, curtis, na1, na2}.  In \cite{fang}, the TR-StoSQP algorithm is introduced, which combines a trust region approach with adaptive trust region radii and allows indefinite Hessians in the subproblem.

In large-scale finite-sum minimization problems, exact evaluation of the objective and its derivatives is computationally expensive. Hence, approximation via subsampling is widely used. Adaptive subsampling - adjusting sample size during iterations - has proven to be an efficient strategy when coupled with line search or trust region mechanisms \cite{iusem1, iusem2, krejic4, krejic2}. Some methods employ additional sampling to decide whether to increase the sample size. Moreover, additional sampling is also used to govern the acceptance of a candidate point, and this approach proved to be very efficient for unconstrained finite-sum minimization \cite{LSOS, krejic4, 23, lsnmbb}.

In \cite{krejic5}, a stochastic first-order method with variable sample size was proposed for minimizing a weighted finite sum under linear equality constraints, using adaptive sampling and approximate projections. The method proposed here generalizes this approach to nonlinear equality constraints, but does not rely on the approximate projection strategy used in \cite{krejic5}.

The non-monotone line search we propose is presented in \cite{lifukushima} and is used in many methods  relax the strict Armijo line search conditions and enhance convergence speed.  In the stochastic framework, non-monotone strategies have also proved their efficiency. Paper \cite{krejic3} proposed a class of algorithms with adaptive sample size combined with non-monotone line search. Non-monotonicity has also been employed in stochastic optimization settings in \cite{krejic4, krejic5, krklecjerinkic4}. 

Solving nonlinear equality-constrained problems with finite-sum structure is challenging due to the high cost of evaluating the objective and its derivatives at each iteration. While penalty methods and SQP-type algorithms have been successfully applied in this setting, most of the existing methods either assume access to full gradients or rely on expensive projection steps. Motivated by these limitations, we designed an efficient first-order algorithm that uses subsampling, avoids expensive projections and the strict decrease imposed by the standard Armijo line search. 

To summarize, our main contributions are as follows:
\begin{itemize}
\item[-] We propose a novel first-order algorithm (ASPEN) for nonlinear equality-constrained finite-sum problems  that combines adaptive sampling, nonmonotone line search, and adaptive penalty parameter update; 
\item[-] ASPEN extends the adaptive sampling framework of \cite{krejic5} to a more general class of problems with nonlinear constraints, while avoiding the (approximate) projections;
\item[-] The theoretical analysis of the proposed method is presented - almost sure convergence of ASPEN is proved   under some standard assumptions for the considered framework;
\item[-] The practical efficiency of ASPEN is demonstrated by comparing it to the relevant state-of-the-art methods  and by illustrating its adaptive nature.  
\end{itemize}

{\bf{Paper organization.}  }{The paper is organized as follows.  In Section \ref{sec3}, we present the proposed algorithm. Section \ref{sec4} contains theoretical analysis and convergence results. Section \ref{sec5} presents numerical experiments. We conclude the paper in Section \ref{sec6} with a summary and directions for future research.}  

\textbf{Notation.}   {Throughout the paper, we use the following notation:
\(\mathbb{R}_+\) denotes the set of non-negative real numbers.
Depending on the argument, the symbol \(\|\cdot\|\) represents the  Euclidean vector norm and the spectral matrix norm. We use “a.s.” to abbreviate “almost sure”/"almost surely". 
$ \mathbb{E}(\cdot) $ and $ \mathbb{E}(\cdot \mid \mathcal{F}) $ denote mathematical expectation and conditional expectation with respect to the \(\sigma\)-algebra \(\mathcal{F}\), respectively.
Finally, for a finite set \(N\), \(|N|\) denotes its cardinality.}  

\section{The algorithm}\label{sec3}

The method that will be proposed within this section  - ASPEN - belongs to a class of first-order methods, i.e., we assume that only the relevant functions and their first-order derivatives are attainable. Given that the full sample size $N$  (the number of data points in machine learning problems) is typically and the data sets often include some data redundancy, we employ a subsampling strategy to reduce the computational load. More precisely,  we use the standard Sample Average Approximations (SAA) to form the objective function approximations. Furthermore, we use the same sample to calculate the gradient approximation, i.e., we take
\begin{align}
f_{\mathcal{N}_k}(x)=\dfrac{1}{N_k}\sum\limits_{i\in\mathcal{N}_k} f_i(x), \,\,\, \nabla f_{\mathcal{N}_k}(x)=\dfrac{1}{N_k}\sum\limits_{i\in\mathcal{N}_k} \nabla f_i(x),
\end{align}
where $\mathcal{N}_k \subseteq \mathcal{N}:=\{1,2,...,N\}$ and  $N_k=|\mathcal{N}_k| $. We emphasize that the choice of sample $\mathcal{N}_k$ is arbitrary, i.e.,  we do not make any restrictions on the sampling strategy for $\mathcal{N}_k$. 

ASPEN fits into the framework of penalty methods. In particular, we use the quadratic penalty function, which takes the following form when considering the  original problem \eqref{OriginalProblem} 
\begin{align*}\label{penalty}
F(x, \mu)=f(x)+\dfrac{\mu}{2}|| h(x)||^2=:f(x)+\mu q(x).
\end{align*}
Here, the parameter $\mu$ is the penalty parameter, and by $q$ we denote the constraints violation function defined by the square norm of $ h$. We assume that the constraints are non-linear in general, but computable under reasonable costs as well as their first-order derivatives. Therefore, we form an approximation of the penalty function as follows
 \begin{equation*}\label{fnk} 
        F_{\mathcal{N}_k}(x, \mu_k)=f_{\N_k}(x)+\dfrac{\mu_k}{2}|| h(x)||^2,
    \end{equation*}
where $\mu_k$ represents the penalty parameter used at iteration $k$. We define  the gradient\footnote{The gradient will always be taken with respect to variable $x$, i.e., $\nabla F_{\mathcal{N}_k}(x, \mu_k):=\nabla_{x} F_{\mathcal{N}_k}(x, \mu_k)$. We drop $x$ from the subscript in order to simplify the notation.} of the approximate (SAA) penalty function at iteration $k$ accordingly, i.e., 
    \begin{equation}\label{gnk} 
        g_k:=\nabla F_{\mathcal{N}_k}(x_k, \mu_k) =\nabla f_{\mathcal{N}_k}(x_k)+\mu_k\nabla^T h(x_k) h(x_k).
    \end{equation}
Since we work with approximate functions, applying a monotone line search may be too restrictive and ineffective. Therefore, we apply a nonmonotone Armijo-type line search similar to one used in \cite{krejic5}, for instance. Setting the search direction to $p_k=-g_k$, we perform the backtracking line search with respect to the approximate penalty function. More precisely, we determine  the step size  $\alpha_k$ that satisfies the following condition
\begin{equation} \label{line9}
        F_{\mathcal{N}_k}(x_k+\alpha_k p_k, \mu_k)\leq F_{\mathcal{N}_k}(x_k, \mu_k)+\eta \alpha_k g_k^Tp_k+\epsilon_k,
        \end{equation}
where  $\{\epsilon_k\}_{k \in \mathbb{N}}$ is assumed to be a sequence of summable positive numbers, i.e., it satisfies   
$$\sum\limits_{k=0}^\infty \epsilon_k<\infty.$$
After the line search, we set $\bar{x}_k=x_k+\alpha_k p_k$ to be the candidate point for the subsequent iterate. 

As mentioned before, depending on the problem, ASPEN may use subsampling during the whole optimization process (i.e., we can have  $N_k <N$ for all $k \in \mathbb{N}$), or it can reach the full sample size (i.e., $N_k=N$ for all $k$ large enough) and switch to the deterministic mode.   We will refer to the first scenario as mini-batch (MB), while the latter one will be referred to as the full sample (FS) scenario. We assume that, if $N_k=N$, then $\N_k=\N$, i.e., we use the whole set of local cost functions $f_i$ (i.e., the whole set of data points). If ASPEN is in the FS mode at iteration $k$ (i.e., if  $N_k=N$), then the candidate point is unconditionally accepted, and we set $x_{k+1}=\bar{x}_k$. Moreover, since the sample size sequence is nondecreasing, the method behaves like a deterministic\footnote{The method still yields a stochastic sequence of iterates since in the FS scenario there exists a finite, but random iteration $\bar{k}$ such that $N_k=N$ for all $k \geq \bar{k}$.} sequential programming penalty method - when a vicinity of a stationary point of the penalty function $F(\cdot, \mu_k)$ is reached, the penalty parameter is increased to $\mu_{k+1}$ and an  approximate solution $\min_{x \in \mathbb{R}^n} F(x, \mu_{k+1})$ is computed.  The novelty of ASPEN lies in the MB phase that we describe in detail as follows.  
If ASPEN is in the  MB phase at iteration $k$ ($N_k<N$), then an additional check is performed to decide whether to accept the candidate point or to reject the step completely. This check is based on additional sampling \cite{LSOS,krejic4,lsnmbb}, which also guides the sample size update. Namely, we  use an independent, arbitrarily  small subsample $\mathcal{D}_k \subseteq \mathcal{N}$, which is selected randomly, without replacement, and check if the candidate point is  "good enough" for the function 
 \begin{equation*}\label{fdk} 
        F_{\mathcal{D}_k}(x_k,\mu_k):=\dfrac{1}{D_k}\sum\limits_{i\in\mathcal{D}_k} f_i(x_k)+\dfrac{\mu_k}{2}|| h(x_k)||^2,
    \end{equation*}
where $D_k=|\mathcal{D}_k|$. 
More precisely, we check if 
\begin{align}\label{relation}
F_{\mathcal{D}_k} (\overline{x}_k,\mu_k) \leq F_{\mathcal{D}_k} (x_k, \mu_k) - c \|\nabla F_{\D_k}(x_k,\mu_k)\|^2 + C \epsilon_k,
\end{align}
where $c$ and  $C$ are positive constants, arbitrarily small and arbitrarily large, respectively. 
This is an important step because it allows for evaluating the algorithm's progress on independent data, with low computational cost (in the experiments we use $D_k=1$, but any other choice such that $D_k \leq N_k$ is eligible). If the condition \eqref{relation} is satisfied, we accept the candidate point; otherwise, we set $x_{k+1}=x_k$. 

The additional check also serves as a test of the similarity of local cost functions (see e.g. \cite{krejic4} for more details). If the condition \eqref{relation} is met, then we decide that the current approximation is sufficient to describe the true objective function and keep the same sample size for the subsequent iteration. On the contrary, if the condition \eqref{line9} does not hold, we try to increase the level of precision and obtain a better approximation of the objective function by increasing the sample size to $N_{k+1}\in\{N_k+1,\dots, N\}$. This increase is arbitrary, which makes  ASPEN very flexible and adaptive to various types of problems. 

The penalty parameter in the MB phase is updated according to the constraint violation measure $\|h(x_k)\|$. If the current iterate is relatively far away from the feasible set, i.e., if $\|h(x_k)\|>\epsilon_k$, we increase the penalty parameter in order to encourage feasibility improvement. Otherwise, we keep it at the same level. 

The algorithm is stated as follows. 
%%%%%%%%%%%%%%%%%%%%%%%%%%%%%%%%%%%%%%%%%%%%%%
\begin{algorithm}
\caption{\textbf{ASPEN:} \\ \textbf{A}dditional \textbf{S}ampling \textbf{P}enalty method for \textbf{E}quality \textbf{N}onlinear constraints}
\begin{algorithmic}[1]
\State  Input: $x_0\in\mathbb{R}^n$, $N_0\in \mathbb{N}$, $c, \beta \in(0,1)$, $C,$ $\gamma$, $\mu_0>0$, $\{\epsilon_k\}_{k=0}^{\infty}.$ 
\State For  $k=0,1,2,...$  
    %\State
    \If{ $N_k<N$} 
    \State choose $\mathcal{N}_k\subseteq \mathcal{N}$
    \Else \State set $\mathcal{N}_k= \mathcal{N}$. 
    \EndIf
    \State  Calculate $p_k=-g_k$ via \eqref{gnk}.
    \State Find the smallest $j \in N_0$ such that $\alpha_k = \beta^j$ satisfies \eqref{line9}.
    \State  Set $\overline{x}_k = x_k + \alpha_k p_k$.
    \If{$N_k = N$}
        \State Set $x_{k+1} = \overline{x}_k$
        %\State
        \If {$\|\nabla F_{\N_k}(x_k, \mu_k)\|<  \frac{1}{\mu_k}$}\label{step13}
        \State Set $\mu_{k+1} = \gamma\mu_k$.
    \Else
        \State Set $\mu_{k+1} = \mu_k$.
    \EndIf

     \Else    
    
        \State Choose $\mathcal{D}_k\subseteq \mathcal{N}$ randomly and uniformly, without replacement.
        \If {\eqref{relation} holds }
        \State Set $x_{k+1} = \overline{x}_k$ and $N_{k+1} = N_k$.
        \Else
        \State Set $x_{k+1} = x_k$, and choose $N_{k+1} \in \{N_k + 1, \dots, N\}$.\label{line23}
        \EndIf
        \If {$\|h(x_k)\|> \epsilon_k$}\label{step25}
        \State Set $\mu_{k+1} = \gamma\mu_k$.\label{step26}
        \Else\label{step27}
        \State Set $\mu_{k+1} = \mu_k$.\label{step28}
        \EndIf\label{step29}
    
    \EndIf
\end{algorithmic}
\label{Alg1}
\end{algorithm}
%%%%%%%%%%%%%%%%%%%%%%%%%%%%%%%%%%%%%%%%%%%%%%
\newpage
\section{Convergence analysis}\label{sec4}
Within this section, we prove almost sure convergence results for ASPEN. The analysis is conducted by observing two possible scenarios (MB and FS) separately, and integrating them at the end. We start by stating the following assumption.

\begin{assumption}\label{assumption1}
The functions $f_i$, $i=1,\ldots, N$ are bounded from below and continuously differentiable with  $L$-Lipschitz continuous gradients. Moreover, the constraints violation function $q$ is continuously differentiable with  $L$-Lipschitz continuous gradient. 
\end{assumption}
Notice that this assumption implies that all the sampled functions $f_{\N_k}$ are also bounded from below and continuously differentiable with  $L$-Lipschitz continuous gradients. Notice that the penalty functions  $F_{\N_k}$ are also bounded from below and continuously differentiable. Moreover, the gradients $\nabla F_{\N_k}$ are also Lipschitz-continuous with the  Lipschitz constant $(1+\mu_k)L.$ Since the line search is performed with the negative gradient direction of the approximate penalty function, it can be shown that 
\begin{equation} \label{baralfa}
\alpha_k \geq \min \left\lbrace1, \frac{2 \beta (1-\eta) }{(1+\mu_k)L}\right\rbrace=:\bar{\alpha}_{\mu_k}.
\end{equation}
Now, let us consider the MB phase of ASPEN. As usual for an additional sampling-based method, we define the following sets needed for the analysis. By  $\mathcal{D}_k^+$ we denote the set of all possible outcomes of $\mathcal{D}_k$ at iteration $k$ that satisfy \eqref{relation}, i.e.,  
\begin{align*}
\mathcal{D}_k^+=\{\mathcal{D}_k\subseteq\mathcal{N} \mid F_{\mathcal{D}_k} (\overline{x}_k,\mu_k) \leq F_{\mathcal{D}_k} (x_k, \mu_k) - c \|\nabla F_{\D_k}(x_k,\mu_k)\|^2 + C \epsilon_k\},
\end{align*}
and by  $\mathcal{D}_k^-$ be the complementary subset, i.e.,  
\begin{align*}
\mathcal{D}_k^-=\{\mathcal{D}_k\subseteq\mathcal{N} \mid F_{\mathcal{D}_k} (\overline{x}_k,\mu_k) > F_{\mathcal{D}_k} (x_k, \mu_k) - c \|\nabla F_{\D_k}(x_k,\mu_k)\|^2 + C \epsilon_k\}.
\end{align*}

The following lemma shows that, if  ASPEN stays in the MB phase during the whole optimization process, then the condition \eqref{relation} is satisfied for all possible choices of $\D_k$. The proof of this lemma is essentially the same as the proof of Lemma 1 \cite{krejic4}, but we state it here for completeness. 

\begin{lemma}
\label{lemma1}
Suppose that Assumption A\ref{assumption1} holds. If $N_k < N$ for all $k \in \mathbb{N}$, then a.s. there exists $k_1 \in\mathbb{N}$ such that $\mathcal{D}_k^- =\emptyset$ for all $k \geq k_1$.
\end{lemma}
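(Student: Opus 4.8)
The plan is to argue by contradiction, converting the essentially deterministic fact that the sample size can increase only finitely often into the required statement about the sets $\mathcal{D}_k^-$ via a conditional Borel--Cantelli argument. First I would fix the natural filtration $\mathcal{F}_k$ containing all randomness generated up to (but excluding) the draw of $\mathcal{D}_k$; in particular $x_k$, $\mu_k$, $N_k$, $\mathcal{N}_k$, $\alpha_k$ and the candidate $\bar{x}_k = x_k + \alpha_k p_k$ are $\mathcal{F}_k$-measurable, so that the sets $\mathcal{D}_k^+$ and $\mathcal{D}_k^-$ defined above are $\mathcal{F}_k$-measurable as well. Writing $A_k := \{\mathcal{D}_k \in \mathcal{D}_k^-\}$ for the event that the realized subsample violates \eqref{relation}, the algorithm enforces $N_{k+1} \geq N_k + 1$ exactly on $A_k$ and $N_{k+1} = N_k$ otherwise.

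The first (and easy) step is a monotonicity count: in the scenario under consideration $N_k < N$ for every $k$, so the integer sequence $\{N_k\}$ is nondecreasing and bounded above by $N-1$, and hence can strictly increase only finitely many times. Thus along every trajectory of this scenario $\sum_k \mathbf{1}_{A_k} < \infty$, i.e.\ $A_k$ occurs only finitely often. The second step provides a uniform lower bound on the conditional probability of $A_k$: since $\mathcal{D}_k$ is drawn uniformly from the finite family of admissible subsets of $\mathcal{N}$ (a family of cardinality at most $2^N$), there is a constant $p_0 > 0$, independent of $k$, with
\[
\mathbb{P}(A_k \mid \mathcal{F}_k) \geq p_0 \quad \text{whenever } \mathcal{D}_k^- \neq \emptyset .
\]
Consequently, on the event $\{\mathcal{D}_k^- \neq \emptyset \text{ infinitely often}\}$ one has $\sum_k \mathbb{P}(A_k \mid \mathcal{F}_k) = \infty$.

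To close, I would invoke the conditional (L\'evy) form of the Borel--Cantelli lemma, which states that $\{A_k \text{ i.o.}\}$ and $\{\sum_k \mathbb{P}(A_k \mid \mathcal{F}_k) = \infty\}$ coincide up to a null set. Together with the previous step, this forces $A_k$ to occur infinitely often with positive probability on $\{\mathcal{D}_k^- \neq \emptyset \text{ i.o.}\}$, contradicting the sure finiteness of $\sum_k \mathbf{1}_{A_k}$ in this scenario; hence $\mathbb{P}(\mathcal{D}_k^- \neq \emptyset \text{ i.o.}) = 0$, which is precisely the claim that a.s.\ there exists $k_1$ with $\mathcal{D}_k^- = \emptyset$ for all $k \geq k_1$. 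I expect the main obstacle to be the measurability and uniform-bound bookkeeping rather than any deep inequality: one must verify carefully that $\mathcal{D}_k^{\pm}$ are genuinely $\mathcal{F}_k$-measurable (so that conditioning on $\mathcal{F}_k$ is legitimate) and that $p_0$ does not degenerate as $k \to \infty$ even though the sampled sets $\mathcal{N}_k$ and $\mathcal{D}_k$ vary with $k$. Assumption A\ref{assumption1} enters only to guarantee that the backtracking line search \eqref{line9} terminates with a well-defined step $\alpha_k \geq \bar{\alpha}_{\mu_k} > 0$ through \eqref{baralfa}, so that $\bar{x}_k$, and therefore $\mathcal{D}_k^{\pm}$, are well defined in the first place.
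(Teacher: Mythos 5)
Your proposal is correct and takes essentially the same route as the paper: both argue by contradiction, use the fact that the nondecreasing sample size bounded by $N-1$ can strictly increase only finitely often, establish a uniform positive lower bound on the probability of drawing a subsample from $\mathcal{D}_k^-$ whenever that set is nonempty, and conclude that avoiding such draws forever has probability zero. The only difference is technical: where the paper uses an informal infinite-product bound $\prod_{k \in K}(1-q) = 0$ over a (random) index set $K$, you invoke the conditional (L\'evy) form of the Borel--Cantelli lemma, which is a more careful formalization of exactly that step.
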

\begin{proof}
Let us assume, aiming for contradiction, that the lemma does not hold. Then there exists an infinite set of indices $K\subseteq\mathbb{N}$ such that $\D_k^{-} = \emptyset $ for all $k\in K$.
Since the sample size sequence is non-decreasing and satisfies $N_k<N$ for all $k\in\mathbb{N}$ by the assumption of this lemma, there must exist some $k_1\in\mathbb{N}$ such that $N_k=\bar{N}<N$ for all $k\geq k_1$. This implies that the number of elements in the subset $\D_k$ satisfies $
D_k\leq N_k\leq\bar{N}\leq N-1.$
Since  $\D_k$ is selected randomly and uniformly from a finite collection, there exists a constant $q>0$ such that
$$
\mathbb{P}(\D_k \in D_k^{-})\geq q\quad\text{for all } k\in K.
$$
Moreover, without loss of generality, we may assume that $K\subseteq\{k \in \mathbb{N}:k\geq k_1\}$, i.e., every $k\in K$ satisfies $k\geq k_1$. Therefore, choosing $\D_k \in D_k^{+} $ for all $k \in K$ is a.s. impossible since 
$$
\mathbb{P}(\D_k \in  \D_k^{+},  k\in K)\leq\prod_{k\in K}(1-q)=0.
$$
This means that a.s. there exists an iteration $\tilde{k} \in K $ such that
$$
\D_{\tilde{k}}\in \D_{\tilde{k}}^{-}.
$$
According to the algorithm, this implies that 
$ N_{\tilde{k}+1}>N_{\tilde{k}}=\bar{N},
$
contradicting the assumption that $ N_k = \bar{N}$ for all $k\geq k_1$. 
Hence, the statement holds.
\end{proof}

Again, following the concept of additional sampling, one can show that the Armijo-like condition related to the full sample penalty function is satisfied for all $k$ large enough, even in the MB scenario.  

\begin{lemma}\label{lemma2}
Suppose that the assumptions of Lemma \ref{lemma1} hold. Then, a.s. the following holds for all $k\geq k_1$ 
$$  F(x_{k+1}, \mu_k)\leq F(x_k, \mu_k)-c \|\nabla F(x_k, \mu_k)\|^2+C \epsilon_k.
$$
\end{lemma}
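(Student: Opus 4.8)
The plan is to exploit the fact, guaranteed by Lemma \ref{lemma1}, that once the iteration index exceeds $k_1$ the ``bad'' set $\mathcal{D}_k^-$ is empty almost surely. By the very definition of $\mathcal{D}_k^-$, this means that the acceptance inequality \eqref{relation} is satisfied by \emph{every} subset $\mathcal{D}_k \subseteq \mathcal{N}$, not merely by the one actually drawn. The single observation that drives the whole argument is that the full index set $\mathcal{N}$ is itself such a subset, and that $F_{\mathcal{N}}(\cdot, \mu_k) = F(\cdot, \mu_k)$ together with $\nabla F_{\mathcal{N}}(x_k, \mu_k) = \nabla F(x_k, \mu_k)$, since averaging over all $f_i$ recovers the true objective $f$ (and hence the true penalty function and its gradient).

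Concretely, first I would invoke Lemma \ref{lemma1} to fix, almost surely, an index $k_1$ with $\mathcal{D}_k^- = \emptyset$ for all $k \geq k_1$. Then, for any such $k$, I would substitute the choice $\mathcal{D}_k = \mathcal{N}$ into \eqref{relation}; using the identifications above this immediately yields
\begin{equation*}
F(\overline{x}_k, \mu_k) \leq F(x_k, \mu_k) - c\|\nabla F(x_k, \mu_k)\|^2 + C\epsilon_k.
\end{equation*}

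It then remains to pass from the candidate point $\overline{x}_k$ to the actual next iterate $x_{k+1}$. Here I would note that, because $\mathcal{D}_k^- = \emptyset$, whatever subset the algorithm actually draws necessarily lies in $\mathcal{D}_k^+$, so the MB acceptance test \eqref{relation} succeeds and ASPEN sets $x_{k+1} = \overline{x}_k$ (rather than rejecting the step). Substituting this identity into the displayed inequality gives exactly the claimed bound for all $k \geq k_1$, which completes the argument.

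I do not expect a genuine obstacle here; the substance of the lemma is almost entirely carried by Lemma \ref{lemma1}, and what is added is essentially a deterministic specialization. The one point deserving a word of care is the legitimacy of the choice $\mathcal{D}_k = \mathcal{N}$: although in practice one samples $\mathcal{D}_k$ with $D_k \leq N_k < N$, the sets $\mathcal{D}_k^{+}$ and $\mathcal{D}_k^{-}$ are defined as ranging over \emph{all} subsets of $\mathcal{N}$, so $\mathcal{N} \in \mathcal{D}_k^{+}$ is a valid deterministic consequence of $\mathcal{D}_k^- = \emptyset$ and requires no probabilistic statement about actually drawing $\mathcal{N}$. I would also make explicit the two elementary identifications $F_{\mathcal{N}} \equiv F$ and $\nabla F_{\mathcal{N}} \equiv \nabla F$, as these are precisely what convert the subsampled inequality into the full-sample statement of the lemma.
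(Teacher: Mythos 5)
Your reduction of the lemma to Lemma \ref{lemma1} via the single substitution $\mathcal{D}_k = \mathcal{N}$ has a genuine gap: it rests on reading $\mathcal{D}_k^{+}$ and $\mathcal{D}_k^{-}$ as ranging over \emph{all} subsets of $\mathcal{N}$, whereas these sets must be understood as collecting the \emph{possible outcomes} of the random draw $\mathcal{D}_k$, which in the MB scenario always has cardinality $D_k \leq N_k \leq N-1$. The full index set $\mathcal{N}$ is therefore never a realizable outcome, and $\mathcal{D}_k^{-} = \emptyset$ asserts nothing about it. This restricted reading is not optional: the proof of Lemma \ref{lemma1} hinges on the bound $\mathbb{P}(\mathcal{D}_k \in \mathcal{D}_k^{-}) \geq q > 0$ whenever $\mathcal{D}_k^{-} \neq \emptyset$, and that bound fails if $\mathcal{D}_k^{-}$ may contain sets that cannot be drawn (a ``bad'' set that is never drawn never triggers the sample-size increase that drives the contradiction). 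The footnote in the paper's own proof of this lemma --- which takes pains to argue that $\{i,\ldots,i\}$ is a \emph{possible choice} of $\mathcal{D}_k$ --- confirms that membership in $\mathcal{D}_k^{\pm}$ is about realizable draws; under your all-subsets reading that footnote would be pointless, since $\{i\} \subseteq \mathcal{N}$ trivially.

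What your argument omits is precisely the bridge from the drawable subsets to the full sample, and that bridge is the real content of the lemma. The paper builds it as follows: $\mathcal{D}_k^{-} = \emptyset$ yields, for \emph{every} index $i \in \mathcal{N}$, the componentwise inequality $F_i(x_{k+1},\mu_k) \leq F_i(x_k,\mu_k) - c\|\nabla F_i(x_k,\mu_k)\|^2 + C\epsilon_k$, because a draw concentrated on the single index $i$ is realizable and for it $F_{\mathcal{D}_k} = F_i$; averaging these $N$ inequalities gives $F(x_{k+1},\mu_k) \leq F(x_k,\mu_k) - \frac{c}{N}\sum_{i=1}^N \|\nabla F_i(x_k,\mu_k)\|^2 + C\epsilon_k$, and convexity of the squared norm (Jensen's inequality) gives $\|\nabla F(x_k,\mu_k)\|^2 \leq \frac{1}{N}\sum_{i=1}^N\|\nabla F_i(x_k,\mu_k)\|^2$, which combined yield the claim. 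It is exactly this averaging-plus-convexity step that makes the full-sample inequality follow from the inequalities on small, drawable sets; asserting $\mathcal{N} \in \mathcal{D}_k^{+}$ outright assumes the conclusion of that step. Your handling of the remaining point --- that $\mathcal{D}_k^{-} = \emptyset$ forces acceptance, so $x_{k+1} = \overline{x}_k$ for $k \geq k_1$ --- is correct and matches the paper.
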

\begin{proof}   
First, recall that Lemma \ref{lemma1} implies that a.s. for all $k \geq k_1$ we have $D^-_k = \emptyset$ and thus the condition \eqref{relation} is satisfied.  Therefore, the candidate point is accepted for all $k$ large enough, i.e.,   $x_{k+1}=\bar{x}_k$ for all  $k \geq k_1$. Moreover, notice that $D^-_k = \emptyset$ also implies\footnote{Since $\D_k$ is chosen uniformly, with replacement, one possible choice for $\D_k$ is $\{i,\ldots, i\}$. If there exists $i$ that violates the condition \eqref{fi}, then the condition \eqref{relation} would be violated for $\D_k=\{i,...,i\}$ which would imply that $D^-_k \neq  \emptyset$.} that for each $i \in \N$ we have 
\begin{equation} \label{fi}
   F_i(x_{k+1}, \mu_k)\leq F_i(x_k, \mu_k)-c \|\nabla F_i(x_k, \mu_k)\|^2+C \epsilon_k,
\end{equation} 
where 
$$F_i(x, \mu_k):=f_i(x_k)+\dfrac{\mu_k}{2}|| h(x)||^2.$$
Thus, considering \eqref{fi}, by summing up and dividing by $N$ we obtain that 
\begin{equation} \label{f}
   F(x_{k+1}, \mu_k)\leq F(x_k, \mu_k)-c\sum_{i=1}^{N} \|\nabla F_i(x_k, \mu_k)\|^2+C \epsilon_k
\end{equation} 
for all $k \geq k_1$. 
Finally, using the convexity of the norm, we obtain 
$$\|\nabla F(x_k, \mu_k)\|^2=\|\frac{1}{N} \sum_{i=1}^{N} \nabla F_i(x_k, \mu_k)\|^2 \leq \frac{1}{N}\sum_{i=1}^{N} \|\nabla F_i(x_k, \mu_k)\|^2.$$
Combining this with \eqref{f}, we conclude the proof. 
\end{proof}
 Before stating the next theorem, let us denote by $\mathbb{E}_{MB}(\cdot)$ and $\mathbb{E}_{FS}(\cdot)$ the conditional expectation concerning all the sample paths of ASPEN that fall into the MB and FS scenario, respectively. Similarly, we define the corresponding conditional  probabilities $\mathbb{P}_{MB}(\cdot)$ and $\mathbb{P}_{FS}(\cdot)$. The proof is essentially the same as the proof of Theorem 1 in \cite{lsnmbb} e.g., but we state its short version for completeness. 

\begin{theorem}\label{theorem1}
Suppose that Assumption A\ref{assumption1} holds and that the sequence $\{x_k\}_{k \in \mathbb{N}}$ generated by ASPEN is bounded. Then 
$$\mathbb{P}_{MB}(\lim_{k \to \infty} \nabla F(x_k, \mu_k)=0)=1.$$
\end{theorem}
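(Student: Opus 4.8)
The plan is to run the standard nonmonotone descent--summation argument on the penalty function, carried out on the event that ASPEN never leaves the MB phase (so that $N_k<N$ for all $k$, the standing hypothesis of Lemmas \ref{lemma1} and \ref{lemma2}). By Lemma \ref{lemma1}, a.s.\ on this event there is $k_1$ with $\D_k^-=\emptyset$ for all $k\ge k_1$, so every candidate is accepted and $x_{k+1}=\bar x_k$; and by Lemma \ref{lemma2}, a.s.\ for all $k\ge k_1$
\[
F(x_{k+1},\mu_k)\le F(x_k,\mu_k)-c\,\|\nabla F(x_k,\mu_k)\|^2+C\,\epsilon_k .
\]
Two facts will be used repeatedly. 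First, Assumption A\ref{assumption1} yields a lower bound on $F$ that is \emph{uniform in the penalty parameter}: since $q\ge 0$ and each $f_i$ is bounded below, $F(x,\mu)=f(x)+\mu q(x)\ge \inf_x f(x)=:f_\ast>-\infty$ for every $\mu\ge 0$. Second, because $\{x_k\}$ is bounded and $f,q,\nabla f,\nabla q$ are continuous, all these quantities are bounded along the sequence; in particular $0\le q(x_k)\le Q$ for some constant $Q$.

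Next I would turn the one-step estimate into a telescoping sum while accounting for the change of $\mu$. Since the penalty parameter is nondecreasing, writing $F(x_{k+1},\mu_k)=F(x_{k+1},\mu_{k+1})-(\mu_{k+1}-\mu_k)\,q(x_{k+1})$ and summing from $k_1$ to $K$ gives
\[
c\sum_{k=k_1}^{K}\|\nabla F(x_k,\mu_k)\|^2\le F(x_{k_1},\mu_{k_1})-f_\ast+C\sum_{k=k_1}^{K}\epsilon_k+\sum_{k=k_1}^{K}(\mu_{k+1}-\mu_k)\,q(x_{k+1}),
\]
where I bounded the telescoped terminal value $F(x_{K+1},\mu_{K+1})$ below by $f_\ast$. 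The first two terms on the right stay finite as $K\to\infty$ (the second by summability of $\{\epsilon_k\}$), so the whole argument reduces to controlling the last sum, whose increments are nonnegative and vanish except at the iterations where the penalty is increased, i.e.\ where $\|h(x_k)\|>\epsilon_k$.

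I would then split according to the behaviour of the nondecreasing sequence $\{\mu_k\}$, which takes values in $\{\mu_0\gamma^{j}\}$. If the penalty is increased only finitely often, then $\mu_k\equiv\bar\mu$ for all large $k$, the last sum above is finite, and letting $K\to\infty$ yields $\sum_k\|\nabla F(x_k,\mu_k)\|^2<\infty$, hence $\nabla F(x_k,\mu_k)\to 0$. Because the events in Lemmas \ref{lemma1} and \ref{lemma2} have full probability, this pathwise conclusion holds a.s.\ on the MB event, giving $\mathbb{P}_{MB}(\lim_k\nabla F(x_k,\mu_k)=0)=1$.

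The main obstacle is the remaining case, in which the penalty is increased infinitely often, so that $\mu_k\to\infty$ and $\sum_k(\mu_{k+1}-\mu_k)q(x_{k+1})$ has infinitely many strictly positive increments; one must show this sum is finite a.s., equivalently that on the MB event $\{\mu_k\}$ is a.s.\ bounded. A promising route is to bound each increment using $q(x_{k+1})\le (F(x_{k+1},\mu_{k+1})-f_\ast)/\mu_{k+1}$, which at an increase step gives $(\mu_{k+1}-\mu_k)q(x_{k+1})\le \tfrac{\gamma-1}{\gamma}\,(F(x_{k+1},\mu_{k+1})-f_\ast)$; substituting this back into the one-step estimate converts it into a recursion in which the shifted penalty value is amplified by the factor $\gamma>1$ at every increase. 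The crux is then to show that such geometric amplification cannot persist on a bounded iterate sequence with $F$ bounded below: intuitively, each increase is triggered by $\|h(x_k)\|>\epsilon_k$ with $\epsilon_k\to 0$, and sustaining infinitely many increases is incompatible with the accumulated descent $c\sum_k\|\nabla F(x_k,\mu_k)\|^2$ remaining finite. Making this incompatibility rigorous, without assuming that $\{x_k\}$ converges, is the delicate point; once $\{\mu_k\}$ is known to be eventually constant, the conclusion follows from the routine descent--summation template of the unconstrained analysis in \cite{lsnmbb}.
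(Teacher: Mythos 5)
Your proposal does not prove the theorem: the case in which the penalty parameter is increased infinitely often in the MB phase, so that $\mu_k\to\infty$, is explicitly left open, and that case is not a removable technicality. Lines \ref{step25}--\ref{step29} of Algorithm \ref{Alg1} place no a priori bound on the number of increases, and the paper needs Theorem \ref{theorem1} precisely in this regime --- the second half of the proof of Theorem \ref{theorem2} treats $\lim_{k\to\infty}\mu_k=\infty$ and feeds $\lim_{k\to\infty}\nabla F(x_k,\mu_k)=0$ into \eqref{gnk2}--\eqref{gnk4}. Your concluding sketch (``geometric amplification cannot persist'') is a hope, not an argument: as your own recursion shows, at each increase step the upper bound on $F(x_{k+1},\mu_{k+1})-f_\ast$ is multiplied by $\gamma>1$, and nothing you write rules out that this happens infinitely often while $\sum_k\|\nabla F(x_k,\mu_k)\|^2$ diverges. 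Moreover, your assertion that finiteness of $\sum_k(\mu_{k+1}-\mu_k)\,q(x_{k+1})$ is \emph{equivalent} to a.s.\ boundedness of $\{\mu_k\}$ is an overstatement: the sum can be finite with $\mu_k\to\infty$, provided $q(x_{k+1})$ decays fast enough along the increase steps --- and that is exactly the scenario a complete proof would have to analyze.

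For comparison, the paper's own proof is shorter than yours, but only because it steps over exactly the point where you got stuck. It telescopes Lemma \ref{lemma2} directly into
\[
F(x_{k_1+l},\mu_{k_1+l})\le F(x_{k_1},\mu_{k_1})-c\sum_{j=0}^{l-1}\|\nabla F(x_{k_1+j},\mu_{k_1+j})\|^2+C\sum_{j=0}^{l-1}\epsilon_{k_1+j},
\]
then applies $\mathbb{E}_{MB}(\cdot)$ and invokes Markov's inequality and Borel--Cantelli. But Lemma \ref{lemma2} bounds $F(x_{k+1},\mu_k)$, not $F(x_{k+1},\mu_{k+1})$; since $q\ge 0$ and $\mu_{k+1}\ge\mu_k$, chaining the lemma across iterations produces precisely the nonnegative correction terms $(\mu_{k+1}-\mu_k)\,q(x_{k+1})$ that you isolated, and the paper's display does not account for them. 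So your bounded-$\mu_k$ half is correct and follows the paper's template (your pathwise passage from $\sum_k\|\nabla F(x_k,\mu_k)\|^2<\infty$ to the a.s.\ limit is in fact more direct than the paper's expectation/Markov/Borel--Cantelli detour, which is superfluous once summability holds almost surely path by path), while the unbounded-$\mu_k$ half remains a genuine gap in your proposal --- one that the paper's proof, as written, does not close either.
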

\begin{proof} Since we observe only the  MB scenario sample paths, Lemma  \ref{lemma2} implies that a.s. the following holds for each $l \in \mathbb{N}$
$$  F(x_{k_1+l}, \mu_{k_1+l})\leq F(x_{k_1}, \mu_{k_1})-c \sum_{j=0}^{l-1} \|\nabla F(x_{k_1+j}, \mu_{k_1+j})\|^2+C \sum_{j=0}^{l-1} \epsilon_{k_1+j}.
$$
Due to summability of $\epsilon_k$ and  boundedness of iterates, applying the conditional expectation $\mathbb{E}_{MB}(\cdot)$ and letting $l \to \infty$ we obtain 
$$\sum_{j=0}^{\infty} \mathbb{E}_{MB}(\|\nabla F(x_{k_1+j}, \mu_{k_1+j})\|^2) < \infty$$
and the result follows from the extended form of Markov's inequality and the Borel-Cantelli lemma. 
\end{proof}
Next, we prove a.s. convergence towards a KKT point in the MB case by considering different scenarios with respect to the penalty parameter sequence. The proof is conducted under the Linear Independence Constraint Qualification (LICQ) assumption. The second part of the proof, considering unbounded $\mu_k$ follows the same steps as the deterministic quadratic penalty method analysis, but we state it here for completeness. 

\begin{theorem}\label{theorem2}
Let assumptions of Theorem \ref{theorem1} hold and assume that $N_k <N$ for every $k \in \mathbb{N}$. Then, a.s., every accumulation point of the sequence $\{x_k\}_{k \in \mathbb{N}}$ at which LICQ holds is a KKT point of problem \eqref{OriginalProblem}. 
\end{theorem}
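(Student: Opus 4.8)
The plan is to argue pathwise on the almost sure event furnished by Theorem \ref{theorem1}, on which $\nabla F(x_k,\mu_k)\to 0$; once we restrict to this full-measure event the statement becomes a purely deterministic claim about limit points, and the ``a.s.'' in the conclusion is inherited from Theorem \ref{theorem1}. Fix such a sample path, an accumulation point $x^*$ at which LICQ holds, and a subsequence $x_{k_j}\to x^*$. I would then organize the proof around a dichotomy for the penalty sequence $\{\mu_k\}$, which by the update in steps \ref{step25}--\ref{step29} is nondecreasing and therefore either stays bounded or diverges to $+\infty$.

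In the bounded case, since $\mu_{k+1}\in\{\mu_k,\gamma\mu_k\}$ with $\gamma>1$, boundedness permits only finitely many increases, so $\mu_k\equiv\bar\mu$ for all $k\ge k_2$. By the update rule this forces $\|h(x_k)\|\le\epsilon_k$ for all such $k$, and summability of $\{\epsilon_k\}$ gives $\epsilon_k\to 0$, whence $\|h(x_k)\|\to 0$ and $h(x^*)=0$ by continuity. Passing to the limit along $x_{k_j}$ in $\nabla f(x_{k_j})+\bar\mu\,\nabla^{T}h(x_{k_j})h(x_{k_j})\to 0$ and using $h(x_{k_j})\to 0$ yields $\nabla f(x^*)=0$, so $x^*$ is a KKT point with multiplier $\lambda^*=0$. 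Note that LICQ is not actually needed in this branch; it is a hypothesis that only becomes binding below.

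The unbounded case $\mu_k\to\infty$ is the heart of the argument and follows the classical quadratic penalty analysis. I would introduce the multiplier estimates $\lambda_k:=\mu_k h(x_k)$, so that $\nabla F(x_k,\mu_k)=\nabla f(x_k)+\nabla^{T}h(x_k)\lambda_k\to 0$. Left-multiplying this relation by the Jacobian $\nabla h(x_{k_j})$ and invoking LICQ, which makes $\nabla h(x^*)\nabla^{T}h(x^*)$ invertible, continuity of $\nabla h$ ensures $\nabla h(x_{k_j})\nabla^{T}h(x_{k_j})$ is invertible for large $j$ with inverses converging to $[\nabla h(x^*)\nabla^{T}h(x^*)]^{-1}$. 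Solving for $\lambda_{k_j}$ then expresses it through convergent quantities, giving $\lambda_{k_j}\to\lambda^*:=-[\nabla h(x^*)\nabla^{T}h(x^*)]^{-1}\nabla h(x^*)\nabla f(x^*)$. Taking limits delivers stationarity $\nabla f(x^*)+\nabla^{T}h(x^*)\lambda^*=0$, while feasibility follows from $h(x_{k_j})=\lambda_{k_j}/\mu_{k_j}\to 0$ (the numerator converges and $\mu_{k_j}\to\infty$), so $h(x^*)=0$. Hence $x^*$ is again a KKT point.

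I expect the main obstacle to be precisely this unbounded branch, and within it the step establishing that the estimates $\lambda_{k_j}$ stay bounded and converge: this is exactly where LICQ is indispensable, since the gradient limit only controls the product $\nabla^{T}h(x_k)\lambda_k$, and without full row rank of $\nabla h(x^*)$ one cannot decouple $\lambda_k$ from this product nor conclude $h(x^*)=0$. Minor care is also needed to keep all limiting operations along the single fixed subsequence and to recognize the zero-multiplier outcome of the bounded case as a legitimate KKT point.
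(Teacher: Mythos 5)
Your proposal is correct and takes essentially the same route as the paper: the same dichotomy on bounded versus unbounded $\{\mu_k\}$, the same conclusion $\|h(x_k)\|\le\epsilon_k\to 0$ and $\nabla f(x^*)=0$ in the bounded branch, and the same multiplier estimates $\lambda_k=\mu_k h(x_k)$ with LICQ in the unbounded branch. The only cosmetic difference is the ordering in the unbounded case: the paper first obtains $h(x^*)=0$ from the estimate $\|\nabla^T h(x_k)h(x_k)\|\le \frac{1}{\mu_k}\left(\|\nabla F(x_k,\mu_k)\|+\|\nabla f(x_k)\|\right)$ together with LICQ, and only then derives convergence of $\lambda_k$, whereas you first establish $\lambda_{k_j}\to\lambda^*$ and then recover feasibility from $h(x_{k_j})=\lambda_{k_j}/\mu_{k_j}\to 0$; both orderings are valid.
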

\begin{proof} Recall that  Theorem \ref{theorem1} implies that $\lim_{k \to \infty} \nabla F(x_k, \mu_k)=0$ a.s. Let $x^*$ be an arbitrary accumulation point of the considered sequence,  i.e., let 
$$\lim_{k \in K} x_k=x^*.$$ Under the MB scenario,  we distinguish two possible cases regarding the penalty parameter - bounded and unbounded  $\mu_k$. 

First, let us assume that $\mu_k$ is bounded. Since the sequence of penalty parameters is non-decreasing, this further implies the existence of  $\bar{\mu}$ such that $\mu_k=\bar{\mu}$ for all $k$ large enough. According to lines \ref{step25}-\ref{step29} of ASPEN,  for all $k$ large enough we have $\mu_{k+1}=\mu_k$ and $\|h(x_k)\|\leq \epsilon_k$. Since $\{\epsilon_k\}_{k \in \mathbb{N}}$ is assumed to be summable, we know that $\lim_{k \to \infty} \epsilon_k=0$, which further implies 
$$\lim_{k \to \infty} h(x_k)=0.$$
Thus, each accumulation point of $\{x_k\}_{k \in \mathbb{N}}$ is feasible and we have $h(x^*)=0$ and therefore   
$$0=\lim_{k \in K} \nabla F(x_k, \mu_k)=\lim_{k \in K} (\nabla f (x_k)+ \mu_k \nabla^T h(x_k) h(x_k))=\nabla f (x^*),$$
holds a.s. and the statement is proved.  

Now, let us consider the case where $\lim_{k \to \infty} \mu_k=\infty$. Then, according to \eqref{gnk} we have 
$$\|\nabla^T h(x_k) h(x_k)\|\leq \frac{1}{\mu_k} (\|\nabla F(x_k, \mu_k)\|+\|\nabla f (x_k)\|)$$
and taking the limit over $K$ we obtain 
$$\|\nabla^T h(x^*) h(x^*)\|=0$$
and the LICQ condition implies $h(x^*)=0$.  Moreover, LICQ also implies that $\nabla h(x^*)\nabla^T h(x^*)$ is non-singular and, due to continuity,  $\nabla h(x_k)\nabla^T h(x_k)$ is also non-singular for each $k \in K$ large enough. Thus, considering \eqref{gnk} again and defining $\lambda_k:=\mu_k h(x_k)$ we obtain 
 \begin{equation}\label{gnk2} 
        \lambda_k =(\nabla h(x_k)\nabla^T h(x_k))^{-1} \nabla h(x_k) ( \nabla F(x_k, \mu_k) -\nabla f(x_k))
    \end{equation}
    for all $k$ large enough and a.s. 
 \begin{equation}\label{gnk3} 
        \lim_{k \in K} \lambda_k =-(\nabla h(x^*)\nabla^T h(x^*))^{-1} \nabla h(x^*) \nabla f(x^*)=:\lambda^*. 
    \end{equation}
    Thus, a.s., 
 \begin{equation}\label{gnk4} 
        0=\lim_{k \in K} \nabla F(x_k, \mu_k)=\nabla f(x^*)+\nabla^T h(x^*)\lambda^*,
    \end{equation}
which together with $h(x^*)=0$ implies that $x^*$ is a KKT point.
\end{proof}

Next, we prove a.s. convergence for the FS scenario.
\begin{theorem}\label{theorem3}
Suppose that Assumption A\ref{assumption1} holds and that the sequence $\{x_k\}_{k \in \mathbb{N}}$ generated by ASPEN is bounded. Moreover, suppose that there exists $\tilde{k} $ such that $N_k=N$ for all $k \geq \tilde{k}$. Then, a.s., there exists an accumulation point of the sequence $\{x_k\}_{k \in \mathbb{N}}$ which is a KKT point of problem \eqref{OriginalProblem} provided that  LICQ holds at that point. 
\end{theorem}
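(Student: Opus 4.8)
The plan is to exploit the fact that, once $N_k=N$ for all $k\ge\tilde k$, the candidate point is always accepted ($x_{k+1}=\bar x_k$) and $F_{\N_k}=F$, so from iteration $\tilde k$ on ASPEN runs as a purely deterministic non-monotone penalty method on the full penalty function $F(\cdot,\mu_k)$. Hence the ``a.s.'' qualifier only reflects the randomness of the finite time $\tilde k$: conditional on the FS event the argument below is deterministic. As in Theorem \ref{theorem2}, I would split the analysis according to whether the non-decreasing sequence $\{\mu_k\}$ stays bounded or tends to infinity, but here the unconditional acceptance together with line \ref{step13} lets me discard the bounded case outright.

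First I would record the sufficient-decrease inequality produced by \eqref{line9} with $p_k=-g_k$ and $g_k=\nabla F(x_k,\mu_k)$, namely $F(x_{k+1},\mu_k)\le F(x_k,\mu_k)-\eta\alpha_k\|\nabla F(x_k,\mu_k)\|^2+\epsilon_k$, and combine it with the step-size lower bound \eqref{baralfa}. Suppose, for contradiction, that $\mu_k$ is bounded; since it is non-decreasing it equals some constant $\bar\mu$ for all $k$ large, so $\alpha_k\ge\bar\alpha_{\bar\mu}>0$ and the inequality telescopes (the penalty function no longer changes). Using that $F(\cdot,\bar\mu)$ is bounded below and that $\sum_k\epsilon_k<\infty$, I would obtain $\sum_k\|\nabla F(x_k,\bar\mu)\|^2<\infty$, hence $\nabla F(x_k,\bar\mu)\to0$. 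But $\mu_k\equiv\bar\mu$ for large $k$ means the test in line \ref{step13} fails for every such $k$, i.e.\ $\|\nabla F(x_k,\bar\mu)\|\ge1/\bar\mu$, which is incompatible with $\nabla F(x_k,\bar\mu)\to0$. This contradiction forces $\mu_k\to\infty$.

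With $\mu_k\to\infty$ established, let $K=\{k:\mu_{k+1}=\gamma\mu_k\}$ be the (necessarily infinite) set of iterations at which the penalty is increased; for every $k\in K$ line \ref{step13} gives $\|\nabla F(x_k,\mu_k)\|<1/\mu_k$, and since $\mu_k\to\infty$ along $K$ this yields $\nabla F(x_k,\mu_k)\to0$ on $K$. Using boundedness of $\{x_k\}$ I would extract a subsequence $K'\subseteq K$ with $x_k\to x^*$, and from this point on the reasoning is identical to the unbounded-penalty part of Theorem \ref{theorem2}: writing $\nabla^T h(x_k)h(x_k)=\mu_k^{-1}(\nabla F(x_k,\mu_k)-\nabla f(x_k))$ and letting $k\to\infty$ in $K'$ gives $\nabla^T h(x^*)h(x^*)=0$, so LICQ at $x^*$ (nonsingularity of $\nabla h(x^*)\nabla^T h(x^*)$) yields feasibility $h(x^*)=0$. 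Defining $\lambda_k=\mu_k h(x_k)$ as in \eqref{gnk2}--\eqref{gnk3}, the same limit shows $\lambda_k\to\lambda^*$ and $0=\nabla f(x^*)+\nabla^T h(x^*)\lambda^*$, so $x^*$ is a KKT point of \eqref{OriginalProblem}.

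The part I expect to be routine is the second half, since it reproduces \eqref{gnk2}--\eqref{gnk4} almost verbatim; the genuinely new step --- and the main obstacle --- is ruling out bounded $\mu_k$, because it is precisely the interplay between unconditional acceptance in the FS mode, the non-monotone descent guarantee, and the stationarity test in line \ref{step13} that is special to this scenario and has no counterpart in Theorem \ref{theorem2}. Care is also needed to ensure the telescoping in that step is legitimate, which is why I first fix $\mu_k\equiv\bar\mu$ so that the function against which decrease is measured does not change across iterations.
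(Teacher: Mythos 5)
Your proposal is correct and follows essentially the same route as the paper's proof: the same line-search decrease inequality with the step-size bound \eqref{baralfa}, the same argument that a fixed penalty parameter $\bar\mu$ would force $\nabla F(x_k,\bar\mu)\to 0$ and hence trigger the test in line \ref{step13}, so $\mu_k\to\infty$, and the same extraction of a convergent subsequence along the penalty-increase iterations followed by the unbounded-$\mu_k$ argument of Theorem \ref{theorem2}. The only cosmetic difference is that you phrase the bounded-$\mu_k$ case as an explicit contradiction, whereas the paper states it directly; the logic is identical.
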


\begin{proof}
Let us consider iterations $k \geq \tilde{k}$. Then, $\N_k=\N$ and the line search implies 
$$  F(x_{k+1}, \mu_k)\leq F(x_k, \mu_k)-\eta \alpha_k\|\nabla F(x_k, \mu_k)\|^2+ \epsilon_k.
$$
Notice that in the FS phase, the penalty parameter $\mu_k$ is increased only if $\|\nabla F(x_k, \mu_k)\|<  1/\mu_k$. On the other hand, if $\mu_k$ is kept fixed on some $\bar{\mu}$, from \eqref{baralfa} and the line search we obtain 
$$  F(x_{k+1}, \bar{\mu})\leq F(x_k, \bar{\mu})-\eta \bar{\alpha}_{\bar{\mu}}\|\nabla F(x_k, \bar{\mu})\|^2+ \epsilon_k
$$
and, due to boundedness of iterates and summability of $\epsilon_k$, we conclude that $\|\nabla F(x_k, \bar{\mu})\|^2$ tends to zero. This further implies that after a finite number of iterations we will have $\|\nabla F(x_k, \bar{\mu})\|<1/\bar{\mu}$. Therefore, we conclude that the penalty parameter cannot be fixed for an infinite number of iterations. In fact, it must be increased infinitely many times, and thus $\lim_{k\to\infty} \mu_k=\infty$ holds. Moreover, we conclude that there exists a subset of iterations $\tilde{K}$ such that 
$\|\nabla F(x_k, \mu_k)\|<  1/\mu_k$ for all $k \in \tilde{K}$ which further implies that 
$$\lim_{k \in \tilde{K}} \nabla F(x_k, \mu_k)=0.$$ Since the sequence of iterates is bounded, there exist $x^*$ and  $\tilde{K}_1 \subseteq \tilde{K}$ such that $\lim_{k \in \tilde{K}_1} x_k=x^*$ and following the steps of the second part of the proof of Theorem \ref{theorem2} we obtain the result. 
\end{proof}

Finally, considering both possible scenarios (FS and MB), we obtain the main result for the ASPEN method. 

\begin{theorem}
    Suppose that Assumption A\ref{assumption1} holds and that the sequence $\{x_k\}_{k \in \mathbb{N}}$ generated by ASPEN is bounded.  Then, a.s., there exists an accumulation point of the sequence $\{x_k\}_{k \in \mathbb{N}}$ which is a KKT point of problem \eqref{OriginalProblem} provided that  LICQ holds.
\end{theorem}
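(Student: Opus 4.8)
The plan is to combine Theorems \ref{theorem2} and \ref{theorem3} by partitioning the underlying probability space according to the two mutually exclusive regimes of the sample size sequence. First I would observe that $\{N_k\}_{k \in \mathbb{N}}$ is, by construction, non-decreasing and bounded above by $N$ (line \ref{line23} and the FS branch of the algorithm guarantee $N_k \leq N_{k+1} \leq N$). A non-decreasing integer sequence bounded above must eventually stabilize, so for every realization exactly one of the following occurs: either $N_k < N$ for all $k$, which is the MB scenario and defines the event $A_{MB}$, or there is a finite (random) index $\tilde{k}$ with $N_k = N$ for all $k \geq \tilde{k}$, which is the FS scenario and defines the event $A_{FS}$. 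These two events are disjoint and satisfy $A_{MB} \cup A_{FS} = \Omega$.

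Next I would invoke the two scenario-specific results conditionally on each event. On $A_{MB}$, Theorem \ref{theorem2} applies: conditional on staying in the MB phase, a.s.\ every accumulation point at which LICQ holds is a KKT point of \eqref{OriginalProblem}. Because the iterate sequence is assumed bounded, the Bolzano--Weierstrass theorem guarantees at least one accumulation point, so provided LICQ holds there, that point is a KKT point; hence
$$\mathbb{P}_{MB}\big(\exists \text{ accumulation point that is KKT under LICQ}\big)=1.$$
On $A_{FS}$, Theorem \ref{theorem3} applies directly and yields the same conclusion with conditional probability one, i.e.
$$\mathbb{P}_{FS}\big(\exists \text{ accumulation point that is KKT under LICQ}\big)=1.$$

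Finally I would assemble the unconditional statement via the law of total probability. Writing $E$ for the event that there exists an accumulation point of $\{x_k\}_{k \in \mathbb{N}}$ which is a KKT point under LICQ, the decomposition of $\Omega$ into $A_{MB}$ and $A_{FS}$ gives
$$\mathbb{P}(E)=\mathbb{P}(E \mid A_{MB})\,\mathbb{P}(A_{MB})+\mathbb{P}(E \mid A_{FS})\,\mathbb{P}(A_{FS})=\mathbb{P}(A_{MB})+\mathbb{P}(A_{FS})=1,$$
since each conditional probability equals one by the previous step. The argument is essentially bookkeeping once the two regimes are identified; the only point requiring care, and the step I would treat as the main obstacle, is verifying that $A_{MB}$ and $A_{FS}$ genuinely exhaust the sample space with no leftover event of positive measure. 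This rests on the monotonicity and integrality of $\{N_k\}_{k \in \mathbb{N}}$ together with its hard upper bound $N$, which together rule out any third regime, so that conditioning on the two scenarios loses no probability mass.
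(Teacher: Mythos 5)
Your proposal is correct and takes essentially the same approach the paper intends: the paper states this theorem without a separate proof, introducing it only with the remark that it follows by considering both possible scenarios (FS and MB), i.e., precisely your combination of Theorem \ref{theorem2} on the event $\{N_k < N \text{ for all } k\}$ and Theorem \ref{theorem3} on the complementary event where the full sample is reached. The details you supply --- monotonicity, integrality and boundedness of $\{N_k\}_{k\in\mathbb{N}}$ making the two regimes exhaustive and disjoint, Bolzano--Weierstrass for existence of an accumulation point, and the law of total probability --- are exactly the bookkeeping the paper leaves implicit.
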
 

\section{Numerical results}\label{sec5}

In this section, we evaluate the performance of the proposed method on real-data machine learning tasks, considering several binary classification datasets from  LIBSVM collection \cite{libsvm} listed in  Table \ref{tab:datasets}. Moreover, to provide further insights into the behavior of the proposed method, we also consider an academic problem (HS24) from the CUTEst collection~\cite{cutest} and modify it by adding different levels of noise. Other benchmark datasets frequently referenced in the literature include those from the UCI Machine Learning Repository \cite{dataset1} and the MNIST handwritten digit database \cite{dataset2}.

We begin our numerical study by demonstrating that ASPEN offers benefits compared to the deterministic approach, where $N_k=N$ for all $k \in \mathbb{N}$.  We call this method  "Full" since it uses the full sample during the whole optimization process. More precisely, Full follows the standard penalty approach by solving each subproblem approximately - until $\|\nabla F_{\N_k}(x_k, \mu_k)\|<  1/\mu_k$ is satisfied, and then it increases the penalty parameter by $\mu_{k+1}=\gamma \mu_k$. Furthermore, we also compare ASPEN to a heuristic method named "Heur"  which starts with a subsample of size $N_0$ and increases it by $N_{k+1} = \min \lbrace \lceil 1.1 N_k \rceil, N \rbrace)$  whenever $\|\nabla F_{\N_k}(x_k, \mu_k)\|<  1/\mu_k$  happens. The penalty parameter is updated as in the Full method, and the same non-monotone line search is applied for solving the subproblems for all the methods mentioned above. 

The parameters of ASPEN are the following. The starting penalty parameter is $\mu_0=1$, while the initial sample size is $N_0=\lceil 0.01N\rceil$. The additional sampling is applied with $ D_k=1,$ $ c=10^{-4}$  and $  C=1$, while the line  search is performed with $ \beta=0.1, $ $\eta=10^{-4}$  and $\epsilon_k=k^{-1.1}$. When needed, the sample size of ASPEN is increased by one, i.e., $N_{k+1}=N_k+1$ in line \ref{line23} of ASPEN. Heur uses the same starting values for the penalty parameter and the sample size. We set $\gamma=1.1$ for all the considered methods. Starting points $x_0$ are equal for all the considered methods and are obtained by normalizing random vectors with a Gaussian distribution.

\begin{table}[htb!]
\centering
\caption{Binary classification data set details~\cite{libsvm}.}  
\begin{tabular}{|l|c|c|}
\hline
\textbf{Dataset} & \textbf{Dimension ($n$)} & \textbf{Datapoints ($N$)} \\
\hline
a9a         & 123 & 32,561 \\
Australian  & 14  & 690    \\
Heart       & 13  & 270    \\
Mushrooms   & 112 & 8,124  \\
Splice      & 60  & 1,000  \\
MNIST & 784 & 60000 \\
\hline
\end{tabular}
\label{tab:datasets}
\end{table}

We consider constrained logistic regression binary classification problems as in \cite{Numerika}, commonly used in machine learning applications. The form is the following 
\begin{align}
\label{problem51}
  \min_{x \in \mathbb{R}^n} f(x) = \frac{1}{N} \sum_{i=1}^{N} \log \left(1 + e^{-b_i a_i^\top x} \right) \quad \text{s.t.}   \quad \|x\|_2^2 = 1,
\end{align}
where $a_i \in \mathbb{R}^n$ and $b_i \in \{-1,1\}$ represent the attributes and the corresponding label for the  $i$-th data point, respectively.  We model the computational cost by $FEV_k$ - the number of scalar products required by the specified method to compute $x_k$, starting from the initial point $x_0$.

To evaluate the performance of the considered methods,  we show: \\
1) the distance between $x_k$ and the solution $x^*$ of the considered problem, i.e., $||x_k-x^*||$,  against computational cost measure  $FEV_k$ in graphs a); \\
2) the sample size behavior across iterations, graphs b); \\
3)  the penalty parameter update across iterations, graphs c).   \\
The results for all the datasets from Table \ref{tab:datasets} in Figures \ref{MNIST_gama11}-\ref{australian_gama11} are presented.

The results show that ASPEN manages to outperform Full and Heur on most of the datasets, especially in light of achieving a better vicinity of the solution with significantly lower computational costs. The results also confirm the adaptive nature of ASPEN, especially when the sample size is considered. The graphs b) reveal that the sample size is increased according to the problem at hand, therefore highlighting ASPEN's data-driven adaptivity. For instance, the MNIST dataset (Fig. \ref{MNIST_gama11}) requires faster sample size growth to cope with the diversity of the data, while the Mushrooms dataset (Fig. \ref{Mushrooms_gama11}) obviously contains more similar data points, which allows good approximate solutions even under modest sample sizes which practically fall into a mini-batch framework. Interestingly, the full sample is not reached in any of the considered problems. Furthermore, as expected, the penalty parameter is increased more rapidly for ASPEN than for the other two methods (graphs c), but according to the optimality gap, it seems to be beneficial -  it allows the algorithm to progressively enforce feasibility while maintaining efficiency. 

\begin{figure}[htb!]
    \centering
    \includegraphics[width=\textwidth]{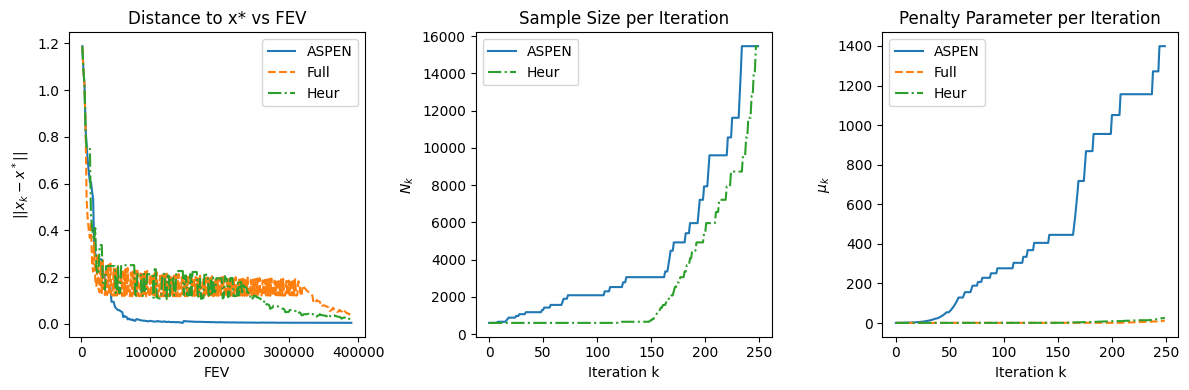}
     \makebox[\textwidth]{%
        \begin{minipage}[t]{0.33\textwidth}
            \centering
            \footnotesize a)
        \end{minipage}%
        \begin{minipage}[t]{0.33\textwidth}
            \centering
            \footnotesize b)
        \end{minipage}%
        \begin{minipage}[t]{0.33\textwidth}
            \centering
            \footnotesize c)
        \end{minipage}%
    }
    \caption{\footnotesize{ \textit{MNIST} dataset. ASPEN vs. Full and Heur:  optimality gap vs. FEV (a); sample size vs. iteration (b);  penalty parameter vs. iteration (c). }}
     \label{MNIST_gama11}
\end{figure}

\begin{figure}[htb!]
    \centering
    \includegraphics[width=\textwidth]{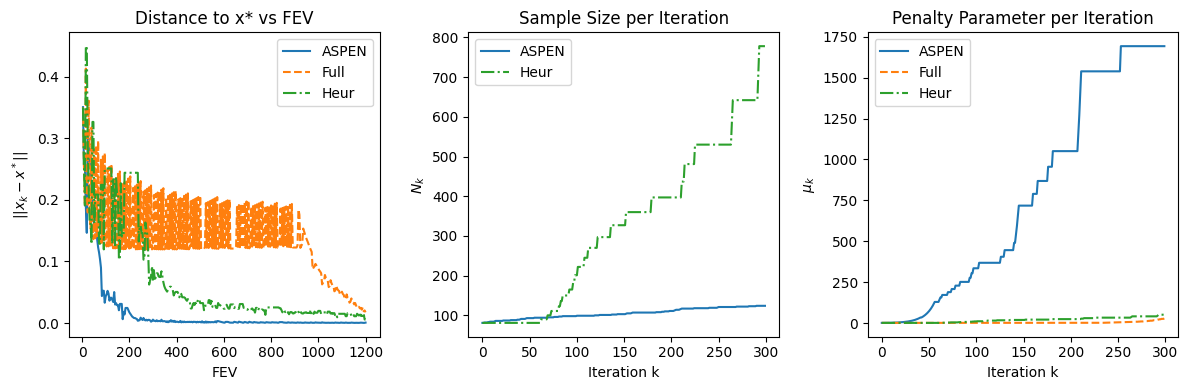}
     \makebox[\textwidth]{%
        \begin{minipage}[t]{0.33\textwidth}
            \centering
            \footnotesize a)
        \end{minipage}%
        \begin{minipage}[t]{0.33\textwidth}
            \centering
            \footnotesize b)
        \end{minipage}%
        \begin{minipage}[t]{0.33\textwidth}
            \centering
            \footnotesize c)
        \end{minipage}%
    }
    \caption{\footnotesize{ \textit{Mushrooms} dataset. ASPEN vs. Full and Heur:  optimality gap vs. FEV (a); sample size vs. iteration (b);  penalty parameter vs. iteration (c).  }}
     \label{Mushrooms_gama11}
\end{figure}

\begin{figure}[htb!]
    \centering
    \includegraphics[width=\textwidth]{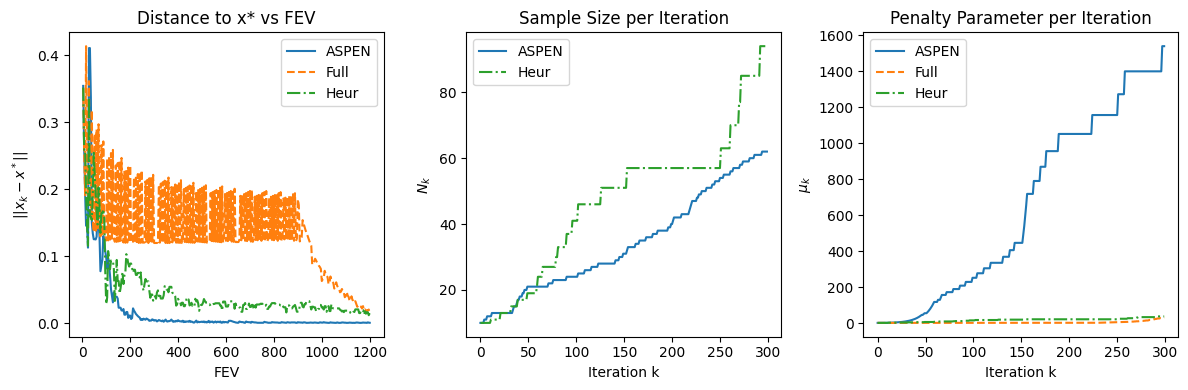}
     \makebox[\textwidth]{%
        \begin{minipage}[t]{0.33\textwidth}
            \centering
            \footnotesize a)
        \end{minipage}%
        \begin{minipage}[t]{0.33\textwidth}
            \centering
            \footnotesize b)
        \end{minipage}%
        \begin{minipage}[t]{0.33\textwidth}
            \centering
            \footnotesize c)
        \end{minipage}%
    }
    \caption{\footnotesize{ \textit{Splice} dataset. ASPEN vs. Full and Heur:  optimality gap vs. FEV (a); sample size vs. iteration (b);  penalty parameter vs. iteration (c).}}
     \label{Splice_gama11}
\end{figure}

\begin{figure}[htb!]
    \centering
    \includegraphics[width=\textwidth]{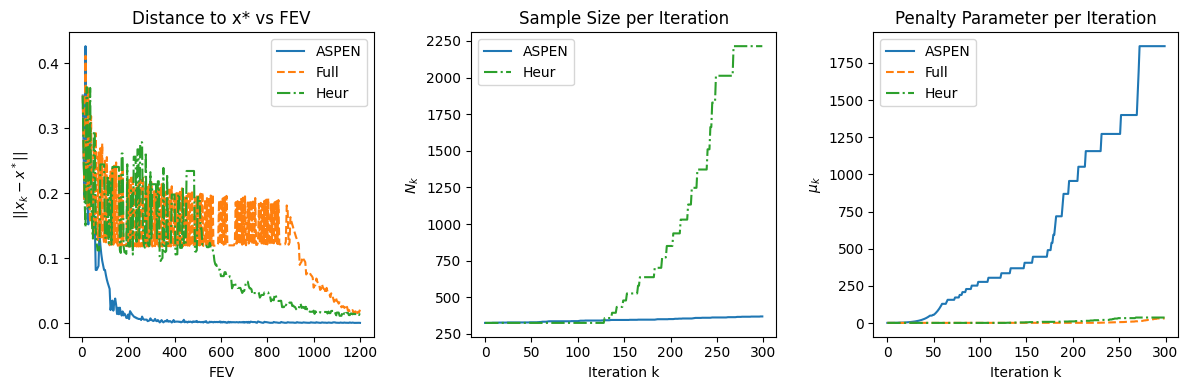}
     \makebox[\textwidth]{%
        \begin{minipage}[t]{0.33\textwidth}
            \centering
            \footnotesize a)
        \end{minipage}%
        \begin{minipage}[t]{0.33\textwidth}
            \centering
            \footnotesize b)
        \end{minipage}%
        \begin{minipage}[t]{0.33\textwidth}
            \centering
            \footnotesize c)
        \end{minipage}%
    }
    \caption{\footnotesize{ \textit{a9a} dataset. ASPEN vs. Full and Heur:  optimality gap vs. FEV (a); sample size vs. iteration (b);  penalty parameter vs. iteration (c).}}
     \label{a9a_gama11}
\end{figure}

\begin{figure}[H]
    \centering
    \includegraphics[width=\textwidth]{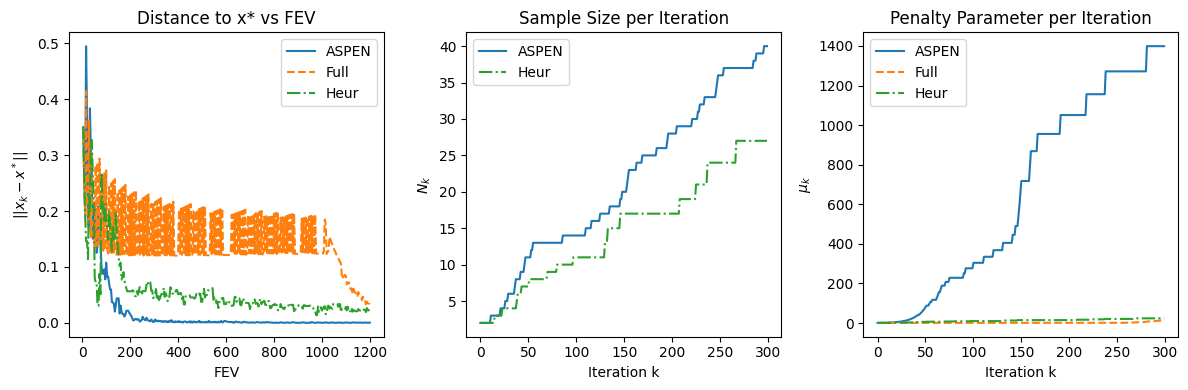}
     \makebox[\textwidth]{%
        \begin{minipage}[t]{0.33\textwidth}
            \centering
            \footnotesize a)
        \end{minipage}%
        \begin{minipage}[t]{0.33\textwidth}
            \centering
            \footnotesize b)
        \end{minipage}%
        \begin{minipage}[t]{0.33\textwidth}
            \centering
            \footnotesize c)
        \end{minipage}%
    }
    \caption{\footnotesize{ \textit{Heart} dataset. ASPEN vs. Full and Heur:  optimality gap vs. FEV (a); sample size vs. iteration (b);  penalty parameter vs. iteration (c).}  }
     \label{heart_gama11}
\end{figure}

\begin{figure}[H]
    \centering
    \includegraphics[width=\textwidth]{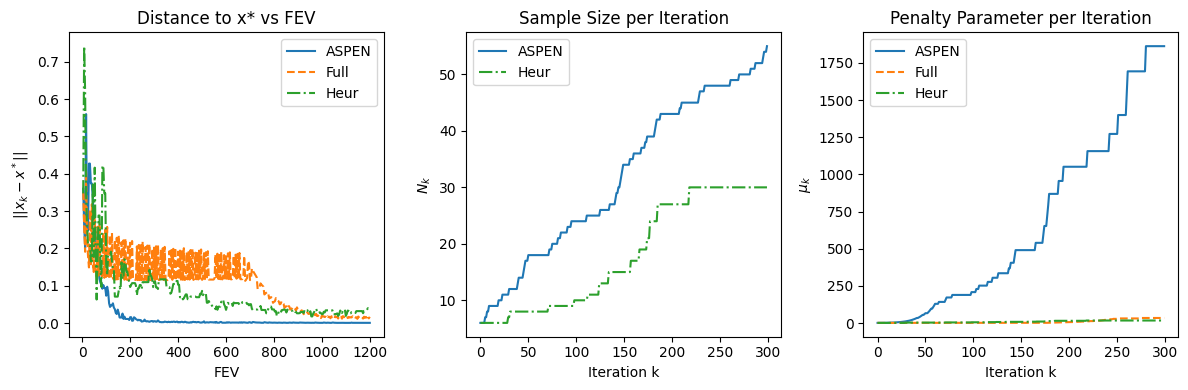}
     \makebox[\textwidth]{%
        \begin{minipage}[t]{0.33\textwidth}
            \centering
            \footnotesize a)
        \end{minipage}%
        \begin{minipage}[t]{0.33\textwidth}
            \centering
            \footnotesize b)
        \end{minipage}%
        \begin{minipage}[t]{0.33\textwidth}
            \centering
            \footnotesize c)
        \end{minipage}%
    }
    \caption{\footnotesize{ \textit{Australian} dataset. ASPEN vs. Full and Heur:  optimality gap vs. FEV (a); sample size vs. iteration (b);  penalty parameter vs. iteration (c).}  }
     \label{australian_gama11}
\end{figure}

In the sequel, we compare ASPEN to the state-of-the-art stochastic optimization methods of the relevant framework: Sto-SQP\cite{berahas2} and SVR-STO\cite{Numerika}. While the ASPEN keeps the same parameter settings as in the previous experiments, the selected parameters for the competing methods are based on empirical tuning and the recommendations from the literature. After validation, the values were fixed and consistently applied across all experiments, as was done in  \cite{Numerika}. In detail, the setup is the following: Sto-SQP  uses  $\theta = 10^{4},\; \tilde{\tau}_{-1}=0.1,\; \epsilon_{\tau}=10^{-6},\;
\tilde{\xi}_{-1}=0.1,\; \epsilon_{\xi}=10^{-2},\; \sigma_{Sto-SQP} = 0.5$; 
    SVR-STO is employed with  $\sigma_{SVR-STO} = 0.5,\; \theta = 10^{4},\; \tilde{\tau}_{-1,0} = 0.1,\; \epsilon_{\tau}=10^{-6},\;\alpha_u = 10^{6},\; \beta_{SVR-STO} = 1$.
Considering Figure \ref{comparison1}, ASPEN shows to be competitive with the state-of-the-art methods, outperforming them on most of the considered datasets. The detailed analysis is provided below. 

\begin{figure}[ht!]
    \centering
    \begin{minipage}[c]{0.48\textwidth}
        \centering
        \includegraphics[width=\textwidth]{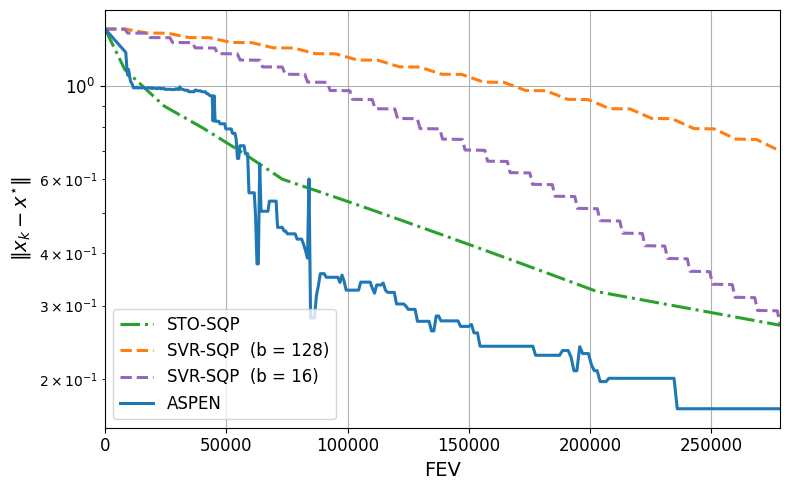}\\
        \footnotesize a)
    \end{minipage}
    \hfill
    \begin{minipage}[c]{0.48\textwidth}
        \centering
        \includegraphics[width=\textwidth]{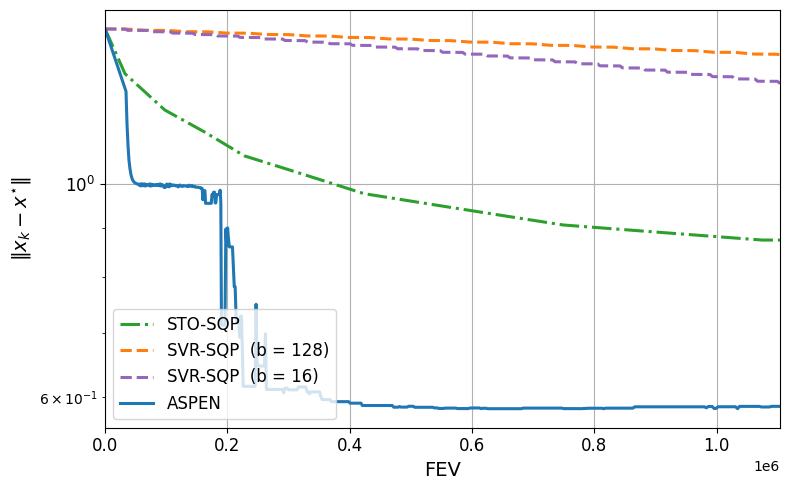}\\
        \footnotesize b)
    \end{minipage}
    \vspace{1em}
        \begin{minipage}[c]{0.48\textwidth}
        \centering
        \includegraphics[width=\textwidth]{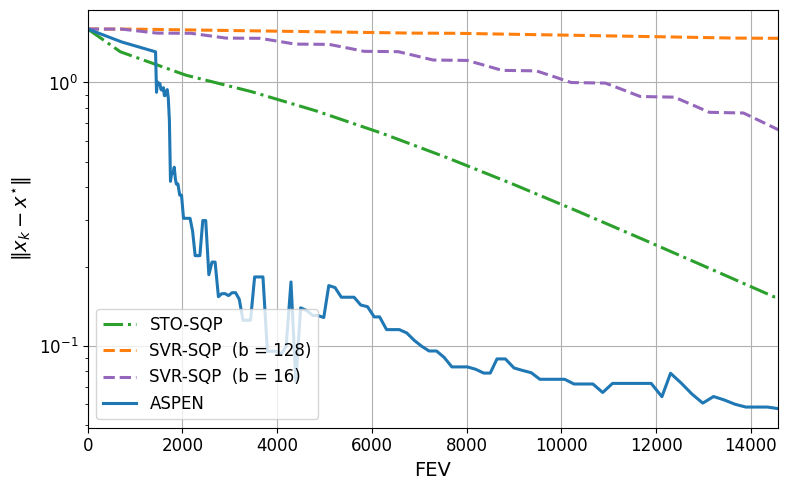}\\
        \footnotesize c)
    \end{minipage}
    \hfill
    \begin{minipage}[c]{0.48\textwidth}
        \centering
        \includegraphics[width=\textwidth]{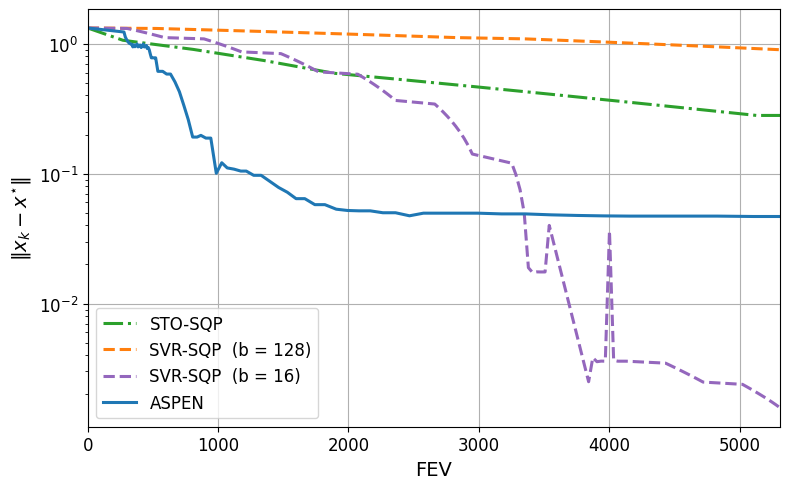}\\
        \footnotesize d)
    \end{minipage}
    \vspace{1em}
        \begin{minipage}[c]{0.48\textwidth}
        \centering
        \includegraphics[width=\textwidth]{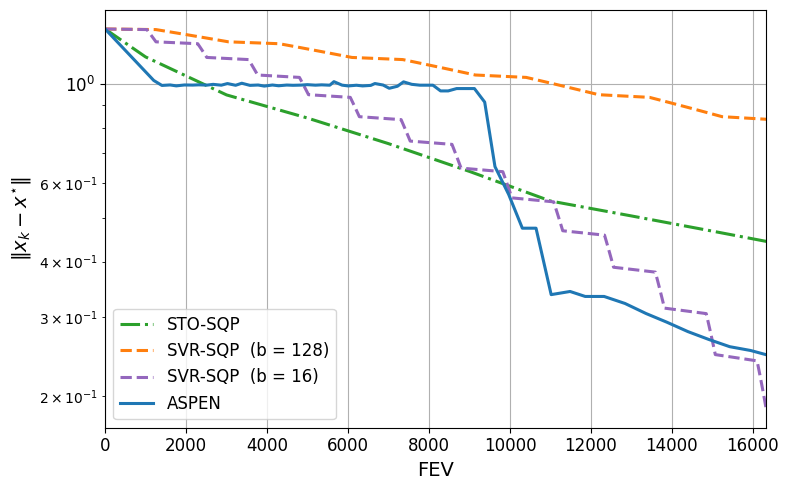}\\
        \footnotesize e)
    \end{minipage}
    \begin{minipage}[c]{0.48\textwidth}
        \centering
        \includegraphics[width=\textwidth]{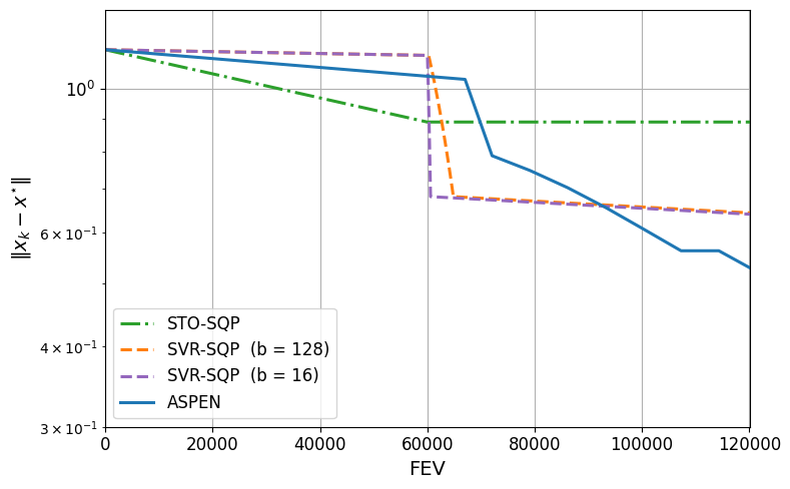}\\
        \footnotesize f)
    \end{minipage}
    \hfill

    \caption{\footnotesize{ASPEN vs. \text{STO-SQP} and two variants of \text{SVR-SQP}. Distance to the optimal solution versus the number of scalar products (FEV):  a) \textit{Mushrooms dataset}; b) \textit{a9a dataset}; c) \textit{Australian dataset}; d) \textit{Heart dataset}; e) \textit{Splice dataset}; f) \textit{MNIST dataset}.  }} 
    \label{comparison1}
\end{figure}

Figure \ref{comparison1} a) – \textit{Mushrooms dataset}. \text{ASPEN's} optimality gap  drops down sharply after only a few FEVs and continues to decline throughout the interval. \text{STO-SQP} shows a stable, monotone decrease but with a much smaller slope, indicating a higher computational cost for the same accuracy. \text{SVR-SQP} with $b = 16$ converges more slowly than \text{STO-SQP}, whereas the \text{SVR-SQP}  with $b = 128$  yields only negligible improvement within the available FEV budget.

Figure \ref{comparison1} b) – \textit{a9a dataset}. \text{ASPEN} is the only method achieving a pronounced error reduction; after a brief plateau, it resumes decreasing, underscoring the effectiveness of its adaptive scheme. \text{STO-SQP} steadily lowers the distance, albeit at a moderate rate. Both \text{SVR-SQP} variants display limited convergence, suggesting that this dataset would require more iterations or a different batch-size schedule.

Figure \ref{comparison1} c) – \textit{Australian dataset}. \text{ASPEN} again attains the largest error reduction and maintains the lowest optimality gap across the entire FEV horizon. \text{STO-SQP} provides a uniform but slower decrease. \text{SVR-SQP} with $b = 16$ accelerates in the later phase and approaches \text{STO-SQP}’s accuracy, highlighting the stochastic variant’s sensitivity to the data’s statistical properties. The $b = 128$ variant shows the smallest improvement, confirming that large batch sizes can hamper stochastic acceleration under a fixed evaluation budget.

Figure \ref{comparison1} d) – \textit{Heart dataset}. ASPEN demonstrates the fastest initial convergence, rapidly reducing the distance to the optimal solution $||x_k-x^*||$ below $10^{-1}$ within just a few hundred FEVs. However, after this sharp drop, its progress stagnates, maintaining a nearly constant level. Interestingly, SVR-SQP $(b = 16)$ continues to steadily decrease over time and eventually surpasses \text{ASPEN} in terms of final accuracy, achieving the lowest distance to the optimum. \text{STO-SQP} and \text{SVR-SQP} $(b = 128)$ exhibit slower and more gradual convergence, remaining less competitive throughout. 

Figure \ref{comparison1} e) – \textit{ Splice dataset}. On this dataset,  \text{SVR-SQP} $(b = 16)$ eventually outperforms \text{ASPEN} in terms of final accuracy, although \text{ASPEN} demonstrates a significantly faster initial convergence. \text{ASPEN} again achieves strong early convergence but experiences prolonged stagnation between $2000$ and $9000$ FEV, only improving afterward. In contrast, \text{SVR-SQP} $(b = 16)$ exhibits consistent and monotonic progress and ultimately outperforms all other methods by achieving the best final solution. \text{STO-SQP} remains moderately effective, while \text{SVR-SQP} $(b = 128)$ shows the slowest rate of improvement. These results highlight that while ASPEN excels in fast early convergence, \text{SVR-SQP} $(b = 16)$ demonstrates superior long-term accuracy on both datasets.

Figure 7 f) – MNIST dataset. On this dataset, for the binary classification problem, we can conclude that the \text{ASPEN} algorithm is the most successful in terms of solution accuracy, although it requires slightly more function evaluations to outperform the other algorithms, which are also reliable in terms of convergence but somewhat slower. The \text{SVR-SQP} algorithms (with batch sizes $b=16$ and $b=128$) produced better results than \text{STO-SQP} in scenarios with a larger number of FEV, even though \text{STO-SQP} initially shows the best performance.

We end this section by providing some more insights on the sample size behavior of the proposed method. To this end,   we consider an academic problem (HS24) from the CUTEst collection~\cite{cutest}, modified by introducing a Gaussian noise. More precisely, in order to simulate a stochastic environment,  we consider a perturbed problem
\begin{equation}
\label{problem52}
   \min_{x \in \mathbb{R}^n} f(x):=\frac{1}{N}\sum_{i=1}^N (\tilde{f}(x)+\varepsilon_i^2||x||^2)  \quad \text{s.t.}   \quad  \|x\|_2^2 = 1,
\end{equation}
where 
$\tilde {f}(x) = (x_1 - 2)^4 + (x_1 - 2x_2)^2$ 
is the objective function of problem HS24 in CUTEst collection,   and  $\varepsilon_i$ values are drawn from Gaussian distribution $\mathcal{N}(0,\sigma^2).$ Different levels of noise, i.e., variance, are employed to model different levels of similarity of local cost functions, where higher level of noise indicates more heterogeneous data.

\begin{figure}[H]
    \centering
    \includegraphics[width=\textwidth]{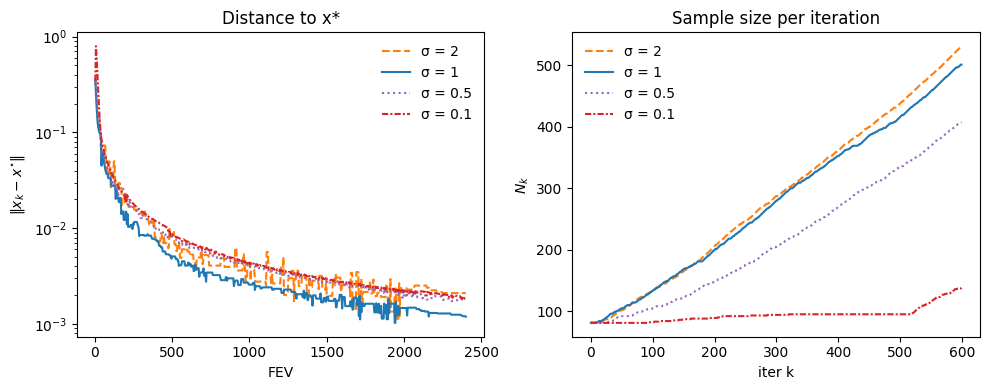}
     \makebox[\textwidth]{%
        \begin{minipage}[t]{0.48\textwidth}
            \centering
            \footnotesize a)
        \end{minipage}%
        \begin{minipage}[t]{0.48\textwidth}
            \centering
            \footnotesize b)
        \end{minipage}%
       
    }
    \caption{\footnotesize{Influence of dissimilarity of local cost functions (modeled by $\sigma \in \{0.1, 0.5, 1, 2\}$)  on  the behavior of ASPEN algorithm applied on problem \eqref{problem52}: a) optimality gap; b) sample size increase. }}
     \label{noise}
\end{figure} 

The results presented in Figure \ref{noise} show the robustness of \text{ASPEN} with respect to the optimality gap and illustrate the adaptive nature of the proposed method. As expected, more heterogeneous data require larger mini-batch sizes to mimic the original objective function properly, and these results clearly indicate that ASPEN is adaptable with respect to the sample size increase. 

\section{Conclusions}\label{sec6}

 We introduced a novel first-order adaptive sampling algorithm for finite-sum minimization (ASPEN) that extends the work of \cite{krejic5} to a more general class of problems with nonlinear equality constraints. The method combines an additional sampling technique with non-monotone line search and puts it into the framework of quadratic penalty methods. The resulting method may behave like a mini-batch or an increasing sample method, depending on the problem at hand. Besides the sample size, the penalty parameter is also updated in an adaptive manner. 
 We proved almost sure convergence of ASPEN  under some standard assumptions for the considered framework, thus providing theoretical support for the proposed method. Numerical results conducted on real-world binary classification problems show that ASPEN is competitive with other state-of-the-art methods. Moreover, a numerical study on an academic problem reveals ASPEN's capability of adapting to different data structures. 
Future work will include potential extensions to problems with nonlinear inequality constraints. 

\vspace{1cm}

%{\bf{Acknowledgement.}   }...
%We are grateful to the associate editor and two anonymous referees whose comments helped us improve the paper.}  

{\bf{Funding.}  } 
N. Kreji\' c and N. Krklec Jerinki\' c are supported by the Science Fund of the Republic of Serbia, Grant no. 7359, Project LASCADO.  T. Ostoji\' c is partially supported by the Ministry of Science, Technological Development and Innovation of the Republic of Serbia (Grants No. 451-03-136/2025-03/200156) and by the Faculty of Technical Sciences, University of Novi Sad through project “Scientific and Artistic Research Work of Researchers in Teaching and Associate Positions at the Faculty of Technical Sciences, University of Novi Sad 2025” (No. 01-50/295). N. Vu\v ci\' cevi\' c is partially supported by the Ministry of Science, Technological Development and Innovation of the Republic of Serbia (Grant no. 451-03-137/2025-03/ 200122)

%{\bf{Availability statement.}  } The datasets analyzed during the current study are available in ...
%the MNIST database of handwritten digits \cite{MNIST},  LIBSVM Data: Classification (Binary Class) \cite{SPLiADL} and UCI Machine Learning Repository \cite{MUSH}.
%All data generated or analyzed during this study are included in this published article.\\

%{\bf{Disclosure statement}}

%{\bf {Conflict of interest.}  } The authors declare no competing interests.

\end{document}